\numberwithin{equation}{section}
\newcommand{\be}{\begin{equation}}
\newcommand{\ee}{\end{equation}}
\newcommand{\pd}{\partial}
\newcommand{\R}{\mathbb R}
\newcommand{\C}{\mathcal C}
\newcommand{\co}{\colon}
\newcommand{\D}{\mathcal D}
\newcommand{\ga}{\gamma}
\newcommand{\la}{\lambda}
\newcommand{\ep}{\varepsilon}
\newcommand{\ovr}{\overrightarrow}
\newcommand\<{\langle}
\renewcommand\>{\rangle}
\newcommand{\grad}{\operatorname{grad}}
\newtheorem{lemma}{Lemma}[section]
\newtheorem{proposition}[lemma]{Proposition}
\newtheorem{theorem}{Theorem}
\theoremstyle{definition}
\newtheorem*{notation}{Notation}
\theoremstyle{remark}
\newtheorem{remark}[lemma]{Remark}
\begin{document}

\title[Distance difference functions]
{Distance difference functions on non-convex boundaries
of Riemannian manifolds}

\author{Sergei Ivanov}
\address{St.~Petersburg Department of Steklov Mathematical Institute of
Russian Academy of Sciences,
Fontanka 27, St.Petersburg 191023, Russia}
\address{Saint Petersburg State University, 
7/9 Universitetskaya emb., St. Petersburg 199034, Russia
}
\email{svivanov@pdmi.ras.ru}

\thanks{Research is supported by the Russian Science Foundation grant 16-11-10039}

\keywords{Distance functions, inverse problems}

\subjclass[2010]{53C20}

\begin{abstract}
We show that a complete Riemannian manifold with boundary
is uniquely determined, up to an isometry,
by its distance difference representation on the boundary.
Unlike previously known results, we do not impose any
restrictions on the boundary.
\end{abstract}

\maketitle

\section{Introduction}

Let $M=(M,g)$ be a complete, connected, $C^\infty$
Riemannian manifold with boundary $F=\pd M\ne\emptyset$.
We study the following structure introduced by Lassas and Saksala \cite{LS}.
For every $x\in M$ consider the \textit{distance difference function}
$$
 D_x\co F\times F \to\R
$$
defined by
\be\label{e:ddef}
 D_x(y,z) = d_M(x,y)-d_M(x,z), \qquad y,z\in F ,
\ee
where $d_M$ is the arclength distance in~$M$.
Note that for every $x_1,x_2\in M$, the distance $d_M(x_1,x_2)$
is realized by a shortest path
which is a $C^1$ curve, see e.g.~\cite{AA}.
The boundary is not assumed convex, thus
a shortest path can touch the boundary, bend along it, etc.

We regard the collection of functions $D_x$, $x\in M$,
as a map 
$$
\D\co M\to\C(F\times F)
$$
given by
\be\label{e:ddef2}
\D(x)=D_x , \qquad x\in M .
\ee
Here $\C(F\times F)$ denotes the space of all continuous functions
on $F\times F$.
The map $\D$ is called the \textit{distance difference representation} of $M$,
and the set $\D(M)\subset\C(F\times F)$
is called the \textit{distance difference data} \cite{LS}
or \textit{travel time difference data} \cite{HS}.
In this paper we use the former term.

The main result of this paper tells that the distance difference data $\D(M)$
determine $M$ uniquely up to a Riemannian isometry.
See Theorem \ref{t:whole} below for the precise statement.
This improves results from \cite{HS} and \cite{I20}
where similar theorems were obtained under additional
assumptions on the boundary.
Moreover in Theorem \ref{t:part} we show that the geometry of an arbitrary open region $U\subset M$
is determined by the partial distance difference data $\D(U)$.

The intuition behind the problem is the following.
Imagine that $M$ is some material object of interest,
for example the Earth, and $g$ represents the speed of
wave propagation in $M$.
A point $x\in M$ can be a spontaneous spherical wave source
(for example, think of microseismic events in the Earth's crust).
An observer measures arrival times
of the wave at a dense set of points on the surface.
Since the time of the event in the interior is unknown,
the information obtained from the measurement is precisely
the same as that provided by the function $D_x$.
Knowing the set $\D(M)$ means that such information
is collected from a dense set of points in the interior,
and the goal is to learn the geometry of $M$ from these data.
See \cite{LS,HS} for more detailed discussion of applications and \cite{KKL}
for applications of ordinary distance representations.
i.e., plain distance functions $d_M(x,\cdot)$ rather than
the differences.

% Inverse problems associated with distance difference functions
% were first studied by Lassas and Saksala \cite{LS}.
% For further developments see \cite{HS} and \cite{I20}.
% Possible applications include seismic tomography and
% other physics-related problems, see \cite{LS,HS}.
% See also \cite{KKL}
% for applications of ordinary distance representations,
% (i.e., plain distance functions $d_M(x,\cdot)$
% in place of their differences in \eqref{e:ddef}).
% Proofs in the present paper are based mainly on
% ideas from \cite{I20} and \cite{KKL}.

\subsection*{Previous results}
Lassas and Saksala \cite{LS} proved the unique determination of $M$
in a different setting where $M$ is compact and has no boundary,
and the ``observation domain'' $F$ is an open subset of $M$
rather than the boundary.
In \cite{I20} this is generalized to the case
of a complete but possibly non-compact manifold $M$
and partial distance difference data $\D(U)$ where $U\subset M$
is an open set whose geometry is to be determined.

The case when $F=\pd M$ turns out to be more difficult.
It is partly addressed in \cite{HS} and \cite{I20}
where the unique determination of the manifold is proved
under various additional assumptions on the boundary.
The assumption in \cite{I20} is the nowhere concavity
of the boundary and in \cite{HS} it is a less restrictive
``visibility'' condition.
We emphasize that in this paper we do not make additional
assumptions on the boundary.

The similar problem about boundary distance data
(without the ``difference'' part) was solved earlier,
see \cite{KKL,Kur}.  See also \cite{KLU} for similar problems
in the Lorentzian setting.
%Here by the boundary distance data we mean
% the range of the map $\mathcal R\co M\to \C(\pd M)$ given by
% $\mathcal R(x)=d_M(x,\cdot)|_{\pd M}$.
% Note that the knowledge of the distance difference function
% \eqref{e:ddef} is equivalent to knowing the distance
% function $\mathcal R(x)=d_M(x,\cdot)|_F$ up to an unknown additive constant.

\subsection*{Statement of the results}
We begin with a global determination result:

\begin{theorem}\label{t:whole}
Let $M=(M,g)$ and $M'=(M',g')$
be complete, connected
Riemannian $n$-manifolds ($n\ge 2$) with a common
boundary $F=\pd M=\pd M'\ne\emptyset$.
Assume that the distance difference data of $M$ and $M'$
coincide as subsets of $\C(F\times F)$,
i.e.\ $\D(M)=\D'(M')$ where $\D$ and $\D'$ are
their respective distance difference representations,
see \eqref{e:ddef} and \eqref{e:ddef2}.

Then $M$ and $M'$ are isometric via a Riemannian isometry that fixes $F$.
\end{theorem}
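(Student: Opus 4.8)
The strategy is to reconstruct $(M,g)$ from $\D(M)$ in a way that makes the isometry with $(M',g')$ canonical. First I would establish that $\D\colon M\to\C(F\times F)$ is injective: if $D_{x_1}=D_{x_2}$ then, in particular, $d_M(x_1,y)-d_M(x_1,z)=d_M(x_2,y)-d_M(x_2,z)$ for all $y,z\in F$; fixing $z$ shows $d_M(x_1,\cdot)-d_M(x_2,\cdot)$ is constant on $F$, and then a standard argument using a shortest path from $x_1$ to $F$ (or first variation at a point of $F$ hit orthogonally, handled carefully since the boundary is not convex) forces this constant to be $0$ and $x_1=x_2$. Next, the map $\D$ is continuous (indeed $1$-Lipschitz into the sup norm, since $|D_{x_1}(y,z)-D_{x_2}(y,z)|\le 2d_M(x_1,x_2)$), so $\D$ is a continuous injection; the real work is to see it is a homeomorphism onto its image and to recover the smooth and metric structure intrinsically. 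Since $\D(M)=\D'(M')$, the composition $\Phi=(\D')^{-1}\circ\D\colon M\to M'$ is then the candidate isometry, and because $D_x$ for $x\in F$ is determined by the boundary distance alone, $\Phi$ fixes $F$ pointwise.

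The heart of the matter is to show $\Phi$ is a smooth Riemannian isometry, and here I would argue locally and separately on the interior and near the boundary. In the interior: pick $x_0\in M\setminus F$ and a point $y_0\in F$ with a shortest geodesic $\ga$ from $y_0$ to $x_0$ that meets $F$ only at $y_0$ and hits the interior immediately; for $x$ near $x_0$ the function $y\mapsto d_M(x,y)$ is smooth near $y_0$, its gradient at $y_0$ is $-\dot\ga(0)$, and from $D_x$ we read off the $F$-derivatives of $d_M(x,\cdot)-d_M(x,z_0)$ near $y_0$, which pins down the unit vector $\dot\ga(0)\in T_{y_0}M$ together with the value of $d_M(x,y_0)$ up to the unknown additive constant $d_M(x,z_0)$. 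Running the geodesic flow from $(y_0,\dot\ga(0))$ for the corresponding (unknown-shift) time realizes a local chart; the unknown additive constant is common to all $x$ in a neighborhood, so \emph{differences} of positions are recovered exactly. This identifies a neighborhood of $x_0$ with a neighborhood of $\Phi(x_0)$ together with its metric, because the geodesic-flow construction is the same for $M$ and $M'$ once the boundary and the shared data agree.

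Near the boundary the same scheme applies but one must allow the shortest path from a second boundary point to touch or run along $F$; the key point is that for $x$ in a small neighborhood of a boundary point $p$ one still has \emph{some} $y_0\in F$ whose shortest connection to $x$ leaves $F$ transversally, so the gradient trick works, and the metric on $F$ itself is common to $M$ and $M'$ by hypothesis, giving the $C^1$ matching along $F$. Finally, smoothness of $\Phi$ upgrades to $C^\infty$ by a boundary-regularity/unique-continuation argument for the eikonal equation $|\grad_x d_M(x,y)|=1$: the reconstructed distance functions satisfy the same PDE on both sides, agree to first order, and hence the charts, and therefore the metrics $g$ and $\Phi^*g'$, agree. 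The main obstacle I anticipate is precisely the \textbf{non-convexity of the boundary}: near a concave boundary point the distance function $d_M(x,\cdot)$ need not be smooth at the nearest boundary point (cut locus phenomena, grazing geodesics), so one cannot naively differentiate $D_x$ along $F$; the fix is to choose, for each interior $x$, an auxiliary boundary point whose shortest connection to $x$ is a genuine transversal geodesic, and to prove such a point always exists and varies nicely with $x$ — this existence/selection lemma, plus the attendant care with the $C^1$-but-not-$C^2$ regularity of shortest paths that touch $F$, is where the bulk of the technical effort will go.
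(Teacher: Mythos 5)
Your setup (injectivity and homeomorphy of $\D$, the candidate map $(\D')^{-1}\circ\D$, and the fact that it must fix $F$ because boundary points are their own nearest boundary points) matches the paper, which imports the locally bi-Lipschitz homeomorphism property of $\D$ from earlier work. But the heart of your argument --- reconstructing local charts by reading off $\ovr{y_0x}$ from tangential derivatives of $D_x$ at a boundary point and then running the geodesic flow --- has two genuine gaps. First, the claim that ``the unknown additive constant is common to all $x$ in a neighborhood'' is false: for fixed $z_0$ the function $y\mapsto D_x(y,z_0)$ equals $d_M(x,\cdot)|_F$ minus the constant $d_M(x,z_0)$, and this constant depends on $x$. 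So the travel time $d_M(x,y_0)$ needed to place $x$ along the geodesic from $y_0$ is not recovered, even up to a shift uniform in $x$; this is exactly the difficulty that distinguishes the distance \emph{difference} problem from the plain boundary distance problem, and your plan assumes it away. Second, the assertion that ``the metric on $F$ itself is common to $M$ and $M'$ by hypothesis'' is not part of the hypothesis: only a common \emph{topological} boundary is assumed. Without knowing $g$ at $y_0$ (both the induced metric on $T_{y_0}F$ and the normal direction) you cannot convert the differential of $D_x(\cdot,z_0)|_F$ at $y_0$ into the unit vector $\ovr{y_0x}\in S_{y_0}M$, so the gradient trick does not get started; and running the geodesic flow to build the chart presupposes the interior metric, which is what is to be determined.

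The paper avoids both problems by never attempting a chart-level reconstruction. It takes the bi-Lipschitz homeomorphism $\phi=(\D')^{-1}\circ\D$, invokes Rademacher's theorem to get differentiability a.e., and proves pointwise that $d_p\phi$ is a linear isometry. The mechanism is: for a nearest boundary point $q$ of $p$ there is a whole neighborhood $V\subset F$ of $q$ reached from $p$ by unique, transversal, cut-point-free shortest paths (this is the ``existence/selection lemma'' you anticipate); a maximum characterization of $x\in[pz]$ purely in terms of $D_p$ and $D_x$ shows that $\phi$ maps these geodesics to geodesics; and differentiating $t\mapsto D_{\ga_1(t)}(z_1,z_2)$ at $t=0$ along two such geodesics gives $-1+\langle v_1,v_2\rangle=\la_i(-1+\langle w_1,w_2\rangle)$ for $i=1,2$, forcing $\la_1=\la_2$, hence $d_p\phi$ is a homothety on an open set of directions and therefore everywhere, and finally $\la=1$. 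Myers--Steenrod then upgrades the resulting metric isometry to a smooth Riemannian one. The two-point differentiation step is the ingredient your plan is missing: it extracts the metric at $p$ from the data without ever needing the boundary metric or absolute travel times.
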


The words ``common boundary'' in the theorem require clarification.
Their precise meaning is the following: $F$ is a topological manifold
and it is identified with $\pd M$ and $\pd M'$ by means of some
homeomorphisms. In other words, we assume that $M$ and $M'$
induce the same topology on $F$ but do not assume that they
induce the same differential structure and metric.

Like in \cite{I20}, we actually prove a more general result on
unique determination of local geometry from the corresponding
partial data:

\begin{theorem}\label{t:part}
Let $M=(M,g)$ and $M'=(M',g')$ be complete, connected
Riemannian $n$-manifolds ($n\ge 2$)
with a common boundary $F=\pd M=\pd M'\ne\emptyset$.
Let $\D$ and $\D'$ denote the distance difference representations
of $M$ and $M'$ in $\C(F\times F)$, see \eqref{e:ddef} and \eqref{e:ddef2}.
Let $U\subset M$ and $U'\subset M'$ be open sets such that
\be\label{e:sameset}
 \D(U) = \D'(U') .
\ee
Then there exists a Riemannian isometry 
$\phi\co U\to U'$
(with respect to the metrics $g$ and $g'$)
such that $\phi|_{U\cap F}$ is the identity.
\end{theorem}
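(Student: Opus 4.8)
The plan is to produce the isometry as the composition $\phi=(\D')^{-1}\circ\D$ and then to verify its properties in turn. The starting point is a set of basic facts about the distance difference representation that have to be established \emph{without} any hypothesis on the boundary: that $\D\co M\to\C(F\times F)$ is continuous (indeed $2$-Lipschitz, since $\|D_x-D_{x_1}\|_\infty\le 2d_M(x,x_1)$), injective, and a homeomorphism onto its image. For injectivity one shows that $D_x=D_{x_1}$ forces $x$ and $x_1$ to lie on a common geodesic normal to $F$ at their common nearest boundary point and, moreover, every minimizing geodesic from $x$ to $F$ to pass through $x_1$; since these geodesics cannot all leave $x$ in one and the same direction, $x_1=x$. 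Applying this to $M'$ as well, the hypothesis $\D(U)=\D'(U')$ gives a homeomorphism $\phi\co U\to U'$ with $D'_{\phi(x)}=D_x$ for all $x\in U$. As the points of $F$ are precisely the points of $M$, resp.\ $M'$, lacking a Euclidean neighborhood, $\phi$ carries $U\cap F$ onto $U'\cap F$. For $x\in U\cap F$ we then have $\phi(x)\in F$, and $D_x=D'_{\phi(x)}$ says that $d_{M'}(\phi(x),\cdot)-d_M(x,\cdot)\equiv c$ on $F$; putting $y=x$ gives $c=d_{M'}(\phi(x),x)\ge0$, and $y=\phi(x)$ gives $c=-d_M(x,\phi(x))\le0$, so $c=0$ and $\phi(x)=x$.

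Now fix an interior point $x\in U\setminus F$; then $\phi(x)$ is interior as well. The geometric input is that the set $G_x\subset F$ of points $y$ joined to $x$ by a unique minimizing geodesic which meets $F$ only at $y$, transversally there, and carries no conjugate point, is dense in $F$ and contains a nonempty open set --- such an open set is obtained by perturbing the endpoint of the minimizing geodesic from $x$ to its nearest point of $F$, which automatically meets $F$ only at that point and orthogonally. For $y\in G_x$ the function $x_1\mapsto d_M(x_1,y)$ is $C^\infty$ near $x$ with gradient $-v_y(x)$, the unit vector at $x$ along the geodesic towards $y$, and as $y$ ranges over $G_x$ the vectors $v_y(x)$ fill a subset of the unit sphere of $g_x$ that is dense in a nonempty open set. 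Hence $\D$, followed by evaluation at finitely many good pairs, is a $C^\infty$ immersion near $x$; the same holds for $\D'$ at $\phi(x)$, and since $\D=\D'\circ\phi$ near $x$ it follows that $\phi$ is $C^\infty$ on $U\setminus F$ with invertible differential.

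Differentiating $\D=\D'\circ\phi$ at $x$ and fixing one point $z_0$ in the (dense) set of $y$ that are good both for $x$ in $M$ and for $\phi(x)$ in $M'$, one obtains, for every such $y$,
\be
 v_y(x)=A\,v'_y(\phi(x))+b ,
\ee
where $A=(d\phi_x)^{*}\co T_{\phi(x)}M'\to T_xM$ is the metric adjoint of $d\phi_x$ and $b\in T_xM$ is independent of $y$. Since $g(v_y(x),v_y(x))=g'(v'_y(\phi(x)),v'_y(\phi(x)))=1$ and the vectors $w=v'_y(\phi(x))$ form a set dense in a nonempty open subset of the quadric $S'=\{g'(\cdot,\cdot)=1\}$, the quadratic polynomial $w\mapsto g(Aw+b,Aw+b)-1$ vanishes on a Zariski-dense subset of $S'$, hence equals a scalar multiple of $g'(\cdot,\cdot)-1$; comparing homogeneous parts forces $b=0$, the scalar to be $1$, and $g(Aw,Aw)=g'(w,w)$ for all $w$. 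Thus $A$, and therefore $d\phi_x$, is a linear isometry, i.e.\ $\phi^{*}g'=g$ on $U\setminus F$. So $\phi$ is a Riemannian isometry on each component of $U\setminus F$, and being a homeomorphism of $U$ onto $U'$ that is the identity on $U\cap F$, it is a Riemannian isometry $\phi\co U\to U'$ fixing $U\cap F$. Theorem~\ref{t:whole} is the special case $U=M$, $U'=M'$.

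The hard part will be the geometric preliminaries. Without convexity, visibility, or any regularity assumption on $F$, minimizing geodesics can bend along the boundary or touch it tangentially, and one must still show: that $\D$ is a well-behaved topological embedding; that its non-smoothness locus is negligible; and --- most importantly --- that from \emph{every} interior point one genuinely sees an $(n-1)$-parameter family of transversal, conjugate-point-free minimizing geodesics reaching $F$ (the nearest-point geodesic is the natural candidate, but that it is ``good'', and that the set of good endpoints is open, is not entirely automatic). This is exactly the step at which the extra assumptions of \cite{HS} and \cite{I20} were previously invoked; granted these facts, the reconstruction of the metric by the quadric argument above is routine.
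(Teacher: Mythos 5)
Your overall architecture ($\phi=(\D')^{-1}\circ\D$, identity on the boundary, then reconstruction of the metric from gradients of distance functions to ``good'' boundary points) matches the paper's, and your endgame differs from it in a genuine and, as far as I can see, workable way: you recover smoothness of $\phi$ on $U\setminus F$ by composing $\D$ with finitely many evaluations and then run a Zariski-density argument on the quadric $\{g'(w,w)=1\}$ to force $b=0$ and $A=(d\phi_x)^*$ to be an isometry, whereas the paper settles for Rademacher differentiability, shows $\|d_p\phi(v)\|_{g'}$ is constant on an open set of directions by a two-point symmetry trick ($\la_1=\la_2$ in Lemma~\ref{l:dphi}), and finishes with Myers--Steenrod. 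Your route, if completed, yields smoothness of $\phi$ directly; the paper's avoids any immersion or inverse-function argument.

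There is, however, a genuine gap, and you have located it yourself: the ``geometric input'' --- that for every interior point $x$ the set $G_x$ of boundary points reached by a unique, transversal, conjugate-point-free minimizing geodesic contains a nonempty open subset of $F$ --- is asserted but not proved, and your last paragraph explicitly defers it as ``the hard part''. This is exactly the content that distinguishes the present theorem from \cite{HS} and \cite{I20}, so a proof that grants it is not a proof of the theorem. The missing ingredient is \cite[Lemma~2.13]{KKL}: an interior point $p$ and its nearest boundary point $q$ are never conjugate along $[pq]$. Given that, one extends $(M,g)$ smoothly past $F$ to a boundaryless $\widehat M$; since $[pq]$ is the unique minimizer and carries no conjugate points, $q$ is not a cut point of $p$ in $\widehat M$, so on a neighborhood $W$ of $q$ the $\widehat M$-geodesics from $p$ are unique, minimizing, and depend smoothly on the endpoint; because $[pq]$ meets $F$ orthogonally and only at $q$, for $z\in W\cap F$ these geodesics stay in $M$ and cross $F$ once, transversally, hence are $M$-minimizers and $d_M(p,\cdot)=d_{\widehat M}(p,\cdot)$ there (this is Lemma~\ref{l:no-cut}). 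Incidentally, your claim that $G_x$ is dense in $F$ is neither needed nor true in general for non-convex boundaries. A second, smaller gap: your evaluation/immersion and quadric arguments need boundary points that are simultaneously good for $x$ in $M$ and for $\phi(x)$ in $M'$, but each good set is only known to be a neighborhood of the respective nearest boundary point; you must therefore first show that a nearest boundary point to $x$ is also nearest to $\phi(x)$ (immediate from the characterization ``$y$ is nearest iff $D_x(y,z)\le 0$ for all $z\in F$'' together with $D_x=D'_{\phi(x)}$, cf.\ Lemma~\ref{l:phi-nearest}), which your write-up never establishes.
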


Note that Theorem  \ref{t:whole} is a special case 
of Theorem \ref{t:part} for $U=M$ and ${U'=M'}$.
The proof of Theorem \ref{t:part} occupies the rest of the paper.
It builds upon results from  \cite{I20} and \cite{KKL}.
In particular one of the important ingredients of the proof is
Proposition \ref{p:bilip} borrowed from \cite{I20}.

The paper is organized as follows.
In Section \ref{sec:prelim} we fix notation,
collect preliminaries and construct a candidate
for the isometry $\phi$.
%At that point, $\phi$ is a bi-Lipschitz homeomorphism.
In Section \ref{sec:nearest} we find a region on
the boundary where a given distance function is regular,
see Lemma \ref{l:no-cut}.
This step of the proof is essentially borrowed from \cite{KKL}.
In Section \ref{sec:shortest}
we show that certain minimizing
geodesics in $M$ are mapped by $\phi$ to geodesics in $M'$
(but not necessarily preserving the arc length).
Similar proof steps can be found in \cite{LS,HS,I20}
but the details in each case are different.
Finally in Section \ref{sec:final} we prove the key Lemma \ref{l:dphi},
which in a sense reconstructs the metric
tensor at a point,
and deduce the theorems.
The arguments in Section \ref{sec:final}
are similar to those in \cite{I20} with some
modifications.

\subsection*{Remarks on regularity}
The Riemannian manifolds in this paper are $C^\infty$.
This is different from \cite{I20} where the proof of the main result
requires only sectional curvature bounds
and works for Alexandrov spaces as well.
More regularity is needed in Proposition~\ref{p:bilip}
whose proof in \cite{I20} depends
on smooth extension of the metric beyond the boundary,
and in Lemma \ref{l:no-cut} where we rely on properties of cut loci.

It is plausible that the result holds under weaker regularity
assumptions such as uniform bounds on the sectional curvature and
the second fundamental form of the boundary.
Such an improvement would
imply stability of manifold determination with respect
to Gromov-Hausdorff topology, cf.~\cite[Proposition 6.4]{I20}.

 \section{Preliminaries and notation}
\label{sec:prelim}

Let $M,g,\D,U,M',g',\D',U'$ be as in Theorem \ref{t:part}.
We denote by $D_x$ and $D'_x$ the distance difference functions
defined by \eqref{e:ddef} for $M$ and $M'$, resp.
%These notations are for the rest of the paper.
In the sequel a number of lemmas are stated
only for $M$ but they apply to both $M$ and $M'$.

\begin{notation}
For $x\in M$, we denote by $T_xM$ the tangent space of $M$ at $x$
and by $S_xM$ the unit sphere of $T_xM$ (with respect to~$g$).
That is,
$$
S_xM = \{v\in T_xM : \|v\|_g = 1\} .
$$
As usual we write $\<u,v\>$ instead of $g(u,v)$ for $u,v\in T_xM$.

For $x\ne y\in M$, we denote by $[xy]$ a shortest path from $x$ to $y$.
In general, a shortest path is not unique;
we assume that some choice of $[xy]$ is fixed for every pair $x,y$.
By $\ovr{xy}$ we denote the unit tangent vector of $[xy]$ at~$x$.
That is, $\ovr{xy}\in S_xM$ is the initial velocity of the unit-speed
parametrization of $[xy]$.
\end{notation}

We need the following standard implication of the Gauss Lemma:
If $x\ne y\in M$ and the distance function
$d_M(\cdot,y)$ is differentiable at $x$ then the Riemannian gradient
of this function at $x$ is given by
\be\label{e:grad0}
 \grad_g d_M(\cdot,y)|_x = - \ovr{xy}
\ee
Equivalently, the differential of $d_M(\cdot,y)$ at $x$ is given by
\be\label{e:grad}
 d_x d_M(\cdot,y) = -\<\cdot , \ovr{xy} \>_g
\ee
for all $v\in T_xM$.

To see why \eqref{e:grad0} holds
(even in manifolds with boundary),
observe that $d_M(\cdot,y)$ is a 1-Lipschitz function and
it decays with speed 1 along $[xy]$.
Hence $-\ovr{xy}$ is the direction of maximum growth
of this function
and the growth rate is~1, therefore it is the gradient.

\medskip

%Back to the structures appearing in Theorem \ref{t:part},
We regard the target space $\C(F\times F)$ of $\D$ 
with the sup-norm distance,
$$
 \|u-v\| = \sup_{x,y\in F} |u(x,y)-v(x,y)|, \qquad u,v\in \C(F\times F).
$$
If $F$ is not compact then this distance can
attain infinite values.
However the distance between functions from $\D(M)$ is always finite.
Indeed, the triangle inequality for $d_M$ implies that
$$
 \|\D(x)-\D(y)\| \le 2d_M(x,y) < \infty
$$
for all $x,y\in M$. 
This inequality also shows that $\D$ is a 2-Lipschitz map.

We need the following result from \cite{I20}.

\begin{proposition}[{\cite[Proposition 7.1]{I20}}]\label{p:bilip}
The map $\D\co M\to\C(F\times F)$ is a locally bi-Lipschitz
homeomorphism between $M$ and $\D(M)$.
\end{proposition}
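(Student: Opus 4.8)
The map $\D$ is $2$-Lipschitz, as observed just above, so what remains is a matching local lower Lipschitz bound together with global injectivity. For the lower bound, fix $x_0\in M$ and seek a neighborhood $W\ni x_0$ and a constant $c>0$ with $\|\D(x)-\D(x')\|\ge c\,d_M(x,x')$ for all $x,x'\in W$. The starting identity is
$$
 D_x(y,z)-D_{x'}(y,z)=\bigl(d_M(x,y)-d_M(x',y)\bigr)-\bigl(d_M(x,z)-d_M(x',z)\bigr),
$$
which shows that for any fixed $y_0,\dots,y_n\in F$ one has
\be\label{e:Phi-bound}
 \|\D(x)-\D(x')\|\ \ge\ \max_{1\le i\le n}\bigl|\Phi_i(x)-\Phi_i(x')\bigr|,\qquad
 \Phi_i:=d_M(\cdot,y_i)-d_M(\cdot,y_0).
\ee
Hence it suffices to choose boundary points $y_0,\dots,y_n$ for which the map $\Phi=(\Phi_1,\dots,\Phi_n)\co M\to\R^n$ is bi-Lipschitz near $x_0$.

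I would obtain such a $\Phi$ from the inverse function theorem. First arrange that each $d_M(\cdot,y_i)$ is $C^\infty$ on a common neighborhood of $x_0$; then $\Phi$ is smooth there, and by \eqref{e:grad0} its differential at $x_0$ is $v\mapsto\bigl(\<v,\ovr{x_0y_0}-\ovr{x_0y_i}\>\bigr)_{i=1}^n$. So it is enough that the unit vectors $\ovr{x_0y_0},\dots,\ovr{x_0y_n}$ be affinely independent, i.e.\ that the $n$ differences $\ovr{x_0y_0}-\ovr{x_0y_i}$ span $T_{x_0}M$; then $d_{x_0}\Phi$ is invertible, so $\Phi$ is a diffeomorphism, in particular bi-Lipschitz, on a small enough $W$. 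The existence of boundary points with this property is the heart of the matter. For $x_0$ in the interior, let $G\subset F$ be the open dense set of $y$ for which $d_M(\cdot,y)$ is smooth near $x_0$, and consider the map $\psi\co G\to S_{x_0}M$, $\psi(y)=\ovr{x_0y}$. At a point $y_*\in G$ where $[x_0y_*]$ meets $F$ transversally --- such $y_*$ exist, for instance near a nearest point of $x_0$ in $F$, where the incidence is orthogonal --- the Gauss lemma shows that $\psi$ is a local diffeomorphism (here $x_0$ and $y_*$ are not conjugate, since $d_M(\cdot,y_*)$ is smooth near $x_0$); consequently $\psi(G)$ contains a nonempty open subset of $S_{x_0}M$, which affinely spans $T_{x_0}M$, and one may take $y_0,\dots,y_n\in G$ realizing $n+1$ affinely independent directions. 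For $x_0\in\pd M$ I would first extend $g$ to a $C^\infty$ metric on an open manifold $\hat M\supset M$ and reduce to the interior case; this reduction, together with the use of properties of cut loci in the smoothing step, is precisely where $C^\infty$-regularity is needed.

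For injectivity, suppose $\D(x)=\D(x')$. Then $y\mapsto d_M(x,y)-d_M(x',y)$ is a constant $c$ on $F$, and we may assume $c\ge0$. Passing to the infimum over $y\in F$ gives $d_M(x,\pd M)=d_M(x',\pd M)+c$; hence a point $y_0\in F$ realizing $d_M(x,\pd M)$ (one exists by completeness) also realizes $d_M(x',\pd M)$, so $[xy_0]$ and $[x'y_0]$ both issue orthogonally from $\pd M$ at $y_0$ and therefore lie on one geodesic $\ga$, with $x,x'$ on $\ga$ and $d_M(x,x')=c$. The identity $d_M(x,y)=d_M(x',y)+d_M(x,x')$, valid for all $y\in F$, then forces, for every $y\in F\setminus\{y_0\}$, some shortest path from $x$ to $y$ to pass through $x'$; since a shortest path is $C^1$ and a geodesic meeting $\pd M$ orthogonally cannot be prolonged into $M$, this is impossible once $x\ne x'$. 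Hence $x=x'$.

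Finally, $\D$ is a continuous injection that is locally bi-Lipschitz; continuity of its inverse (hence the homeomorphism property) follows from a standard properness argument using completeness of $M$, and then $\D$ is a locally bi-Lipschitz homeomorphism onto $\D(M)$. The step I expect to be hardest is establishing the non-degeneracy of the direction set $\psi(G)$ --- in particular near a concave part of $\pd M$, where shortest paths to the boundary may bend along it --- together with the cut-locus and metric-extension technicalities of the boundary case.
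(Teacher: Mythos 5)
The paper itself offers no proof here --- the proposition is imported verbatim from \cite[Proposition 7.1]{I20} --- so your argument must stand on its own. For \emph{interior} points your scheme is sound and consonant with the machinery the present paper builds around this proposition: a nearest boundary point $q$, the non-conjugacy of \cite[Lemma 2.13]{KKL}, and the resulting open set of directions $\Sigma\subset S_{x_0}M$ (exactly as in Lemma \ref{l:no-cut} and the proof of Lemma \ref{l:dphi}) give $n+1$ boundary points with affinely independent directions, an invertible $d_{x_0}\Phi$, and hence the lower Lipschitz bound via $\|\D(x)-\D(x')\|\ge\max_i|\Phi_i(x)-\Phi_i(x')|$. (Your claim that the set $G$ of good boundary points is open and \emph{dense} is unjustified, but harmless: you only use a neighborhood of $q$.) The injectivity argument via a common nearest boundary point and the non-prolongability of the normal geodesic is also correct.

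The genuine gap is the case $x_0\in\pd M$, which is the crux of the proposition in a paper whose whole point is non-convex boundaries. ``Extend $g$ to $\widehat M$ and reduce to the interior case'' is not a reduction: the functions $d_M(\cdot,y)$ are not restrictions of distance functions of $\widehat M$, and $x_0$ has no interior surrogate. Worse, the mechanism itself fails there. Take $M=\R^n\setminus B$, the exterior of a round ball, and $x_0\in F=\pd B$. Every shortest path from $x_0$ to another boundary point runs along the sphere, so every direction $\ovr{x_0y}$ lies in the hyperplane $T_{x_0}F$; these directions cannot affinely span $T_{x_0}M$, and by \eqref{e:grad} every $d_M(\cdot,y)$ with $y\ne x_0$ has vanishing one-sided normal derivative at $x_0$. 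A direct computation gives $d_M(x_0+h\nu,y)-d_M(x_0,y)=O(h^{3/2})$ for such $y$ ($\nu$ the inward normal), so for any \emph{fixed} finite family $y_0,\dots,y_n$ one has $\max_i|\Phi_i(x_0+h\nu)-\Phi_i(x_0)|=o(h)$: no first-order argument at $x_0$, with any choice of boundary points, can produce the lower Lipschitz bound in the normal direction. The bound is nevertheless true --- e.g.\ $|D_{x_0+h\nu}(y,x_0)-D_{x_0}(y,x_0)|$ is comparable to $h$ --- but this uses reference points at distance comparable to $h$ from $x_0$, where the linearization at $x_0$ is useless; handling this regime is precisely why the proof in \cite{I20} is more than an application of the inverse function theorem. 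As written, your argument establishes the proposition only on $M\setminus\pd M$ (and the concluding properness step is asserted rather than proved, though that part is standard).
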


Now, applying Proposition \ref{p:bilip} to $M$ and $M'$
and using the assumption \eqref{e:sameset}
of Theorem \ref{t:part}, 
we can define a locally bi-Lipschitz homeomorphism
$
 \phi \co U\to U'
$
by
$$
 \phi = (\D')^{-1}\circ\D|_{U} .
$$
The definition of $\phi$ implies that
\be\label{e:phi0}
 D_x = D'_{\phi(x)}
\ee
for all $x\in U$.
% Equivalently (see \eqref{e:ddef} and \eqref{e:ddef2}),
% \be\label{e:phi}
%  d_{M}(x,y)-d_{M}(x,z) = d_{M'}(\phi(x),y)-d_{M'}(\phi(x),z)
% \ee
% for all $x\in U$ and $y,z\in U'$.
Our ultimate goal is to prove that $\phi$ is a Riemannian isometry.

\section{Nearest and almost nearest boundary points}
\label{sec:nearest}

Since $M$ is complete, all closed balls of the metric $d_M$
are compact, see e.g.\ \cite[Proposition 2.5.22]{BBI}.
Therefore for every $x\in M$
there exists at least one nearest boundary point,
i.e., a point $y\in F$ realizing the minimum of 
the function $d_M(x,\cdot)|_F$.

% Indeed, Let $B$ be a closed metric ball centered at $x$ of sufficiently large radius
% such that $B\cap F\ne\emptyset$.
% Then $B\cap F$ is compact and nonempty,
% hence the function $d_M(x,\cdot)|_{B\cap F}$ attains a minimum
% at some point $y\in B\cap F$.
% Since $B$ is a metric ball centered at $x$,
% this minimum is a global minimum on~$F$.

\begin{lemma}\label{l:phi-nearest}
Let $x\in U$ and let $y\in F$ be a nearest boundary point to $x$ in $M$.
Then $y$ in a nearest boundary point to $\phi(x)$ in $M'$.
\end{lemma}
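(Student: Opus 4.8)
The plan is to extract from the defining relation \eqref{e:phi0} the fact that the two restricted distance functions $d_M(x,\cdot)|_F$ and $d_{M'}(\phi(x),\cdot)|_F$ differ only by an additive constant, so that they attain their minima at exactly the same points of $F$.

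First I would fix $x\in U$ and unpack \eqref{e:phi0}. Since $D_x=D'_{\phi(x)}$, for every pair $y,z\in F$ we have
\[
 d_M(x,y)-d_M(x,z) = d_{M'}(\phi(x),y)-d_{M'}(\phi(x),z),
\]
which rearranges to $d_{M'}(\phi(x),y)-d_M(x,y) = d_{M'}(\phi(x),z)-d_M(x,z)$. Hence the function $w\mapsto d_{M'}(\phi(x),w)-d_M(x,w)$ on $F$ is constant; call its value $c(x)$. Equivalently,
\[
 d_{M'}(\phi(x),w) = d_M(x,w) + c(x) \qquad\text{for all } w\in F.
\]

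Second, I would invoke the hypothesis that $y$ is a nearest boundary point to $x$: by definition $d_M(x,w)\ge d_M(x,y)$ for every $w\in F$. Adding $c(x)$ to both sides and using the displayed identity gives $d_{M'}(\phi(x),w)\ge d_{M'}(\phi(x),y)$ for all $w\in F$. Thus $y$ realizes the minimum of $d_{M'}(\phi(x),\cdot)|_F$, i.e.\ $y$ is a nearest boundary point to $\phi(x)$ in $M'$, which is the assertion of Lemma \ref{l:phi-nearest}.

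There is essentially no obstacle here: the only input beyond \eqref{e:phi0} is the trivial remark that shifting a function by a constant does not move its minimizers, and for this direction no compactness or existence argument is needed (existence of a nearest boundary point, which follows from completeness, was recorded just before the lemma and only serves to make the hypothesis non-vacuous). The same computation applied symmetrically shows that the sets of nearest boundary points of $x$ and of $\phi(x)$ coincide, and the constant $c(x)$ isolated here — the ``clock offset'' relating the two distance functions — is the natural object to carry forward into the later steps of the proof.
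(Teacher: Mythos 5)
Your proof is correct and is essentially the paper's argument in a mildly different phrasing: the paper characterizes nearest boundary points by the condition $D_x(y,z)\le 0$ for all $z\in F$ and transfers it via \eqref{e:phi0}, while you equivalently note that \eqref{e:phi0} makes $d_{M'}(\phi(x),\cdot)-d_M(x,\cdot)$ constant on $F$ so the minimizers coincide. No gap; nothing further needed.
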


\begin{proof}
A point $y\in F$ is a nearest boundary point to $x$ if and only if
$$
\forall z\in F \qquad D_x(y,z) \le 0 ,
$$
see \eqref{e:ddef}.
By \eqref{e:phi0}, this relation implies the same one for $D'_{\phi(x)}$
in place of $D_x$.
Hence $y$ in a nearest boundary point to $\phi(x)$ in $M'$.
\end{proof}

Now we can prove that $\phi$ satisfies the last 
requirement of Theorem \ref{t:part}.

\begin{lemma}\label{l:id-on-boundary}
$U\cap F=U'\cap F$ and $\phi|_{U\cap F}$ is the identity map.
\end{lemma}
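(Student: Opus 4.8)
The plan is to use Lemma \ref{l:phi-nearest} together with the elementary observation that a boundary point is its own nearest boundary point. First I would take $x\in U\cap F$. Then $x$ itself is a nearest boundary point to $x$ in $M$ (the distance $d_M(x,\cdot)|_F$ vanishes at $x$), so by Lemma \ref{l:phi-nearest} the point $x$ is a nearest boundary point to $\phi(x)$ in $M'$. In particular $d_{M'}(\phi(x),x)=d_{M'}(\phi(x),F)$, and I want to conclude that this common value is $0$, i.e.\ $\phi(x)=x$.

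To get $d_{M'}(\phi(x),F)=0$, I would go back to the defining relation $D_x=D'_{\phi(x)}$ from \eqref{e:phi0}. Evaluating $D_x(x,z)=d_M(x,x)-d_M(x,z)=-d_M(x,z)$ for $z\in F$, and comparing with $D'_{\phi(x)}(x,z)=d_{M'}(\phi(x),x)-d_{M'}(\phi(x),z)$, one reads off, upon taking $z$ approaching $x$ along $F$ (using that $F$ is a topological manifold, hence has no isolated points, and that the right side is continuous in $z$), that $d_{M'}(\phi(x),x)=\lim_{z\to x}\bigl(D'_{\phi(x)}(x,z)\bigr) = \lim_{z\to x} D_x(x,z) = -\lim_{z\to x} d_M(x,z)=0$. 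Hence $\phi(x)=x$. This shows $U\cap F\subset U'\cap F$ and that $\phi$ is the identity on $U\cap F$; the reverse inclusion $U'\cap F\subset U\cap F$ follows by the symmetric argument applied to $\phi^{-1}=(\D)^{-1}\circ\D'|_{U'}$, which satisfies $D'_{x'}=D_{\phi^{-1}(x')}$ and so fixes every point of $U'\cap F$ in $M$.

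I do not expect a genuine obstacle here; the only point requiring minor care is the limiting argument, which needs that boundary points are non-isolated (guaranteed since $F$ is a topological $(n-1)$-manifold with $n\ge 2$) and the continuity of $D'_{\phi(x)}$, which holds because $D'_{\phi(x)}\in\C(F\times F)$. An even shorter route, avoiding limits, is to evaluate $D_x(x,x)=0=D'_{\phi(x)}(x,x)=d_{M'}(\phi(x),x)-d_{M'}(\phi(x),x)$, which is a tautology and gives nothing, so the limiting step (or equivalently using Lemma \ref{l:phi-nearest} to first pin down $x$ as a nearest point and then a separate nonzero-distance test point) really is needed; I would therefore keep the nearest-point argument as the backbone and use \eqref{e:phi0} only to extract the value of $d_{M'}(\phi(x),F)$.
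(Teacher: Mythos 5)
There is a genuine gap, and it sits exactly at the step you flag as "requiring minor care." In the chain
\[
d_{M'}(\phi(x),x)=\lim_{z\to x} D'_{\phi(x)}(x,z) = \lim_{z\to x} D_x(x,z) = -\lim_{z\to x} d_M(x,z)=0,
\]
the first equality is false. By definition $D'_{\phi(x)}(x,z)=d_{M'}(\phi(x),x)-d_{M'}(\phi(x),z)$, and since the topologies induced on $F$ by $M$ and $M'$ agree, $d_{M'}(\phi(x),z)\to d_{M'}(\phi(x),x)$ as $z\to x$ in $F$. Hence $\lim_{z\to x}D'_{\phi(x)}(x,z)=0$ \emph{automatically}, for any point $\phi(x)\in M'$ whatsoever; the computation reduces to $0=0$ and carries no information about $d_{M'}(\phi(x),x)$. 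Writing $d_{M'}(\phi(x),x)$ for that limit amounts to assuming $\lim_{z\to x}d_{M'}(\phi(x),z)=0$, i.e.\ $d_{M'}(\phi(x),x)=0$, which is precisely the conclusion you are after. And the difficulty is not merely technical: from $D_x=D'_{\phi(x)}$ alone one only gets $d_{M'}(\phi(x),\cdot)|_F=d_M(x,\cdot)|_F+c$ with $c=d_{M'}(\phi(x),F)=d_{M'}(\phi(x),x)$, and nothing in your argument excludes $c>0$, i.e.\ $\phi(x)$ being an interior point of $M'$ whose boundary distance function happens to be a shift of $d_M(x,\cdot)|_F$. (One could rule this out, e.g.\ by noting that for an interior point the restricted distance function is smooth with a critical point at the foot of the perpendicular, hence grows quadratically near $x$, while $d_M(x,\cdot)|_F$ grows linearly — but that is a real argument you would have to supply.)

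The paper closes this hole by a purely topological observation that your proposal is missing: $\phi\colon U\to U'$ is a homeomorphism of topological manifolds with boundary $\pd U=U\cap F$ and $\pd U'=U'\cap F$, so by invariance of the boundary $\phi(U\cap F)=U'\cap F$. This immediately gives $\phi(x)\in F$, hence $\phi(x)$ is at distance $0$ from itself in $F$; since $x$ is the \emph{unique} nearest boundary point to $\phi(x)$ (the strict inequality $D_x(x,z)<0$ for $z\ne x$ transfers through \eqref{e:phi0} just as in Lemma \ref{l:phi-nearest}), it follows that $\phi(x)=x$. Your nearest-point backbone is the right one — you just need $\phi(x)\in F$ as an input, and the boundary-invariance argument is the intended way to get it.
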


\begin{proof}
Since $U$ and $U'$ are open subsets of $M$ and $M'$,
they are topological manifolds, possibly with boundaries
$\pd U= U\cap F$ and $\pd U'=U'\cap F$.
Since $\phi$ is a homeomorphism between $U$ and $U'$,
it sends boundary to boundary. Thus 
\be\label{e:id-on-boundary1}
  \phi(U\cap F)=U'\cap F .
\ee
Any point $x\in U\cap F$ is a unique nearest boundary
point to itself.
This and Lemma \ref{l:phi-nearest} imply that
$x$ is a unique nearest boundary point to $\phi(x)$.
Since $\phi(x)\in F$ by \eqref{e:id-on-boundary1}, it follows that $\phi(x)=x$.
Thus $U\cap F=U'\cap F$ and $\phi|_{U\cap F}$ is the identity.
\end{proof}

Let $p\in M\setminus F$ and let $q\in F$
be a nearest boundary point to $p$.
Consider a shortest path $[pq]$.
It meets $F$ only at $q$,
therefore it is a Riemannian geodesic.
Furthermore, the first variation formula implies that $[pq]$
meets $F$ orthogonally at~$q$.
These properties imply that $[pq]$ is a unique shortest path
between $p$ and $q$.

As shown in \cite[Lemma 2.13]{KKL}, $p$ and $q$ are not
conjugate along $[pq]$, hence $p$ is not a cut point of $q$
and therefore the distance function $d_M(\cdot,q)$ is smooth at~$p$.
Since the cut-point relation is closed, similar properties hold
for all boundary points sufficiently close to~$q$.
Namely we have the following lemma.

\begin{lemma}\label{l:no-cut}
Let $p\in M\setminus F$
and let $q\in F$ be a nearest boundary point to $x$.
Then there exists an neighborhood $V\subset F$ of $q$
such that for every $z\in V$ the following holds:
\begin{enumerate}
\item There is a unique shortest paths $[pz]$ in $M$;
\item $[pz]\cap F = \{z\}$ and $[pz]$ meets $F$ transversally;
\item the function $d_M(p,\cdot)$ is differentiable at $z$;
\item the function $d_M(\cdot,z)$ is differentiable at $p$.
\end{enumerate}
\end{lemma}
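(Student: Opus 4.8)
The plan is to start from the single reference point $q$, where all four conclusions are already known to hold (as recalled just before the statement), and then propagate them to a neighborhood using the fact that the relevant ``bad'' sets are closed. First I would recall the key input from \cite[Lemma 2.13]{KKL}: since $[pq]$ is the unique shortest path, meets $F$ only at $q$, and meets $F$ orthogonally there, the endpoints $p$ and $q$ are non-conjugate along $[pq]$; in particular $p \notin \operatorname{cut}(q)$, so $d_M(\cdot,q)$ is smooth near $p$ and $d_M(p,\cdot)$ is smooth near $q$. So conclusions (1)--(4) all hold at $z = q$, and the task is to show they persist for $z$ in a small neighborhood $V \subset F$ of $q$.

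The core tool is the closedness of the cut-point relation on $M \times M$ (standard on complete Riemannian manifolds; this is where, as the authors note in the regularity remarks, smoothness is genuinely used). Concretely, the set $\{(z', z'') : z' \in \operatorname{cut}(z'')\} \subset M \times M$ is closed, so its complement is open; since $(p,q)$ lies in that complement, there is a product neighborhood $W \times V_0$ of $(p,q)$ contained in it, and then for every $z \in V_0 \cap F$ we have $p \notin \operatorname{cut}(z)$. This immediately gives (1) (uniqueness of the minimizing geodesic $[pz]$, since a cut point is exactly where uniqueness or minimality first fails) and, via the Gauss lemma / inverse function theorem applied to $\exp_z$ or $\exp_p$ away from the cut locus, gives (3) and (4) (differentiability, in fact smoothness, of $d_M(p,\cdot)$ at $z$ and of $d_M(\cdot,z)$ at $p$).

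For conclusion (2), the idea is a continuity/limiting argument: if it failed, there would be a sequence $z_k \to q$ in $F$ such that $[pz_k]$ meets $F$ at some point other than $z_k$, or fails to meet $F$ transversally at $z_k$. Using compactness of closed balls in $M$ (noted at the start of Section \ref{sec:nearest}), the unit-speed paths $[pz_k]$ subconverge to a shortest path from $p$ to $q$; by uniqueness this limit is $[pq]$. A subsidiary boundary point of intersection $w_k \in [pz_k]\cap F$, $w_k \ne z_k$, would then accumulate to a point of $[pq]\cap F = \{q\}$ distinct from the terminal point in the limit, forcing $[pq]$ to touch $F$ at an interior parameter — contradicting that $[pq]$ meets $F$ only at $q$. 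Similarly, non-transversality of $[pz_k]$ at $z_k$ would pass to the limit and contradict orthogonality (hence transversality) of $[pq]$ at $q$; here I would use that the tangent direction $\overrightarrow{z_k p}$ depends continuously on $z_k$ away from the cut locus, which we already have from the previous paragraph. Intersecting $V_0 \cap F$ with the neighborhood produced by this argument yields the desired $V$.

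The main obstacle is the transversality part of (2): orthogonality at $q$ is rigid (it comes from $q$ being a \emph{nearest} boundary point, via the first variation formula), but nearby $z$ need not be nearest points, so we only get an open condition (transversality) rather than an equality, and we must be careful that the limiting geodesic really is $[pq]$ and that the angle with $F$ varies continuously — which in turn relies on the no-cut-locus conclusion established first. Everything else is a fairly routine assembly of standard Riemannian facts.
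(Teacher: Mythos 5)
Your overall skeleton (non-conjugacy of $p$ and $q$ from \cite[Lemma~2.13]{KKL}, then propagation to a neighborhood by openness/closedness) matches the paper's, but there is a genuine gap at the core: you invoke the standard cut-locus package --- closedness of the cut relation, $\exp$ a diffeomorphism off the cut locus, smoothness of the distance there --- directly on $M$, which is a manifold \emph{with (possibly non-convex) boundary}. None of that package is available there: shortest paths in $M$ are only $C^1$, may contain boundary arcs, and need not be geodesics of $g$ at all, so ``$p\notin\operatorname{cut}(z)$'' is not a standard, usable hypothesis in $M$ and does not by itself yield (1), (3) or (4). The paper's proof hinges on a device you omit: extend $M$ to a complete boundaryless $\widehat M$ in which $F$ is a separating hypersurface, run the cut-locus argument in $\widehat M$, and then --- this is the step doing the real work --- shrink the neighborhood so that each $\widehat M$-geodesic $[pz]$ meets $F$ at most once and transversally, hence stays in $M$; only then does one get $d_M=d_{\widehat M}$ near the relevant points, so that uniqueness and smoothness statements proved in $\widehat M$ transfer to $d_M$. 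This also exposes a circularity in your ordering: you derive (3) and (4) before (2), but the differentiability of $d_M$ (as opposed to the distance of some ambient smooth manifold) is exactly what requires knowing first that the shortest paths are interior geodesics.

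A secondary flaw is in your limiting argument for (2): a subsidiary intersection point $w_k\in[pz_k]\cap F$ with $w_k\ne z_k$ need not accumulate at a point of $[pq]$ ``distinct from the terminal point'' --- it can perfectly well converge to $q$ itself, in which case there is no contradiction with $[pq]\cap F=\{q\}$. Ruling out such near-endpoint grazing requires the transversality (here orthogonality) of $[pq]$ at $q$, e.g.\ via a Rolle-type argument on the signed distance to $F$ along $[pz_k]$ --- i.e.\ precisely the part of (2) you treat separately. The paper obtains both halves of (2) at once from the smooth dependence of the $\widehat M$-geodesics on $z$ together with the fact that $[pq]$ meets $F$ exactly once and transversally.
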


\begin{proof}
The proof is similar to that of Lemma 2.14 in \cite{KKL} and its essence
is explained above. Here are the formal details.

In order to use standard properties of Riemannian cut loci, we extend
$M$ beyond the boundary to obtain
a complete boundaryless Riemannian $n$-manifold $\widehat M$
such that $F$ is a smooth hypersurface in $\widehat M$
separating $M$ from its complement.
Since $q$ is a nearest to $p$ point of $F$,
$[pq]$ is a unique
shortest path between $p$ and $q$ in both $M$ and $\widehat M$.
By \cite[Lemma 2.13]{KKL}, $p$ and $q$ are not conjugate along $[pq]$.

Therefore $q$ is not a cut point of $p$ in $\widehat M$.
Hence there is a neighborhood $W$ of $q$ in $\widehat M$
such $d_{\widehat M}(p,\cdot)$ is smooth on $W$ and
every point $z\in W$ is connected to $p$
by a unique $\widehat M$-minimizing geodesic whose
direction at $z$ depends smoothly on~$z$.
Since $[pq]$ meets $F$ orthogonally at $q$
and has no other points on $F$,
one can choose a neighborhood
$W_0\subset W$ of $q$ such that, for every $z\in W_0$ the $\widehat M$-minimizing
geodesic $[pz]$ intersects $F$ at most once and transversally.
For $z$ from the ``half-neighborhood'' 
$W_0\cap M$, this property implies that $[pz]\subset M$
and therefore $[pz]$ is a shortest path in both $\widehat M$ and $M$.
Hence $d_M(p,z)=d_{\widehat M}(p,z)$ for all $z\in W_0\cap M$.

Thus for all $z\in V:=W_0\cap F$ the requirements (1)--(3) of the lemma are satisfied.
To prove (4), recall that the cut-point relation is
symmetric.
Hence, for every $z\in V$, $p$ is not a cut point of $z$
and a similar argument shows that 
$d_M(\cdot,z)=d_{\widehat M}(\cdot,z)$ in a neighborhood of $p$.
These properties imply that $d_M(\cdot,z)$ is differentiable at~$p$.
\end{proof}

\begin{remark}\label{r:twice good}
By Lemma \ref{l:phi-nearest}, a nearest boundary point 
$y$ to $x\in U$ is also a nearest boundary
point to $\phi(x)$ in $M'$.
Applying Lemma \ref{l:no-cut} to both manifolds
and taking the intersection of the respective neighborhoods,
we obtain a neighborhood $V\subset F$ satisfying
the requirements (1)--(4) of the lemma
for both $x\in M$ and $\phi(x)\in M'$. % in place of $x$ and $M$.
\end{remark}

\section{Shortest paths to boundary points}
\label{sec:shortest}

The main result of this section is Lemma \ref{l:phi-segment}
about $\phi$-images of certain geodesics
(compare with \cite[Lemma 2.9]{LS} and \cite[Lemma 6.3]{I20}).
The following preparation lemma 
characterizes these geodesics in terms of distance difference
functions.

\begin{lemma}\label{l:segment}
Let $p\in M\setminus F$
and let $z\in F$ be a point satisfying
conditions (1)--(4) from Lemma \ref{l:no-cut}.
Then for every $x\in M$ the following holds:
$x\in[pz]$ if and only if the function
$\Phi\co F\to\R$ given by
\be\label{e:Dpx}
 \Phi(y) = D_p(y,z)-D_x(y,z), \qquad y\in F,
\ee
where $D_p=\D(p)$ and $D_x=\D(x)$
(see \eqref{e:ddef} and \eqref{e:ddef2})
attains its maximum at~$z$.
\end{lemma}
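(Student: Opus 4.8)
The plan is to unwind the definitions so that the function $\Phi$ becomes a combination of ordinary distance functions, and then use the triangle inequality together with the regularity assured by conditions (1)–(4).

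First I would compute $\Phi$ explicitly. Using \eqref{e:ddef},
\[
 \Phi(y) = \bigl(d_M(p,y)-d_M(p,z)\bigr)-\bigl(d_M(x,y)-d_M(x,z)\bigr)
 = \bigl(d_M(p,y)-d_M(x,y)\bigr) + c,
\]
where $c = d_M(x,z)-d_M(p,z)$ is a constant independent of $y$. So maximizing $\Phi$ over $F$ is the same as maximizing $y\mapsto d_M(p,y)-d_M(x,y)$ over $F$. By the triangle inequality this quantity is always $\le d_M(p,x)$, with equality at a given $y_0\in F$ precisely when $d_M(p,y_0)=d_M(p,x)+d_M(x,y_0)$, i.e. when $x$ lies on a shortest path from $p$ to $y_0$. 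Evaluating at $y=z$ gives $\Phi(z) = d_M(p,z)-d_M(x,z)+c = 0$; and in general $\Phi(y)\le d_M(p,x) - d_M(p,z) + d_M(x,z)$. Thus $\Phi$ attains its maximum at $z$ if and only if $d_M(p,z)-d_M(x,z) \ge d_M(p,y)-d_M(x,y)$ for all $y\in F$.

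For the \textbf{forward direction}, suppose $x\in[pz]$. Then $d_M(p,z)=d_M(p,x)+d_M(x,z)$, so $d_M(p,z)-d_M(x,z)=d_M(p,x)$, which is the global upper bound for $y\mapsto d_M(p,y)-d_M(x,y)$ on all of $M$, hence certainly on $F$. Therefore $\Phi(z)\ge\Phi(y)$ for all $y\in F$, and $z$ is a maximum point. This direction needs no regularity hypotheses. For the \textbf{converse}, suppose $\Phi$ attains its maximum at $z$, i.e. $d_M(p,z)-d_M(x,z)\ge d_M(p,y)-d_M(x,y)$ for all $y\in F$. The point is to show this forces $d_M(p,z)-d_M(x,z)=d_M(p,x)$, equivalently $x\in[pz]$. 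The natural approach is a first-variation/differential argument at $z$: by condition (3) of Lemma \ref{l:no-cut} the function $d_M(p,\cdot)$ is differentiable at $z$, and $d_M(x,\cdot)$ is differentiable at $z$ as well provided $x\notin[pz]$ doesn't obstruct it — here one must be slightly careful, but one can first dispose of the case $x=z$ (trivial) and $x$ on the extension of $[pz]$, and otherwise use that $z$ is a smooth boundary point where the relevant distance functions behave well. Since $z$ is an interior maximum of $\Phi|_F$ and $F$ is a manifold (locally around $z$ it is $W_0\cap F$ with $[pz]$ transversal to $F$), the differential of $\Phi|_F$ vanishes at $z$, which by \eqref{e:grad} says $\ovr{zp}$ and $\ovr{zx}$ have the same tangential component along $F$ at $z$. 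Combined with the fact from conditions (1)–(2) that $[pz]$ meets $F$ transversally and that $d_M(p,\cdot)$ is smooth near $z$, one concludes the unit geodesic leaving $z$ toward $p$ and the one toward $x$ agree, so $x$ lies on the geodesic ray $[zp]$; a comparison of the distance values then pins $x$ to the segment $[pz]$ itself rather than its extension.

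The main obstacle I expect is exactly this last step of the converse: ensuring that the equality of tangential gradients at $z$ genuinely forces $x\in[pz]$ and not merely that $x$ sits on the full geodesic line through $z$ and $p$ (possibly on the far side of $z$, or beyond $p$). Handling the beyond-$p$ case should be easy since then $d_M(p,z)-d_M(x,z)$ would be strictly less than $d_M(p,x)$ unless $x=p$. The subtle case is $x$ on the opposite ray from $z$, i.e. outside $M$ — but that cannot happen since $x\in M$ and $[pz]$ enters $M$ transversally at $z$, so the geodesic continued past $z$ immediately leaves $M$; this is where condition (2) is essential. One should also make sure $d_M(x,\cdot)$ is differentiable at $z$ when needed; if not, one can instead argue with one-sided directional derivatives of $\Phi|_F$ along a curve in $F$ through $z$, using that $d_M(x,\cdot)$ is $1$-Lipschitz and that $d_M(p,\cdot)$ is genuinely differentiable, which still yields the required vanishing of the tangential derivative of $d_M(p,\cdot)-d_M(x,\cdot)$ at the maximum.
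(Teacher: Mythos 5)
Your overall strategy is the same as the paper's: reduce $\Phi$ to $\Psi(y)=d_M(p,y)-d_M(x,y)$, get the ``only if'' direction from the triangle inequality, and in the converse use the first-order condition at the maximum point $z$ to force $\ovr{zx}=\ovr{zp}$ (two unit vectors in the closed half-space pointing into $M$ with equal tangential projections must coincide), and then resolve the ray ambiguity. However, there is a genuine gap in your treatment of the ``beyond-$p$'' case, i.e.\ $p\in[zx]$ with $p\ne x$. You dismiss it by noting that then $d_M(p,z)-d_M(x,z)=-d_M(p,x)<d_M(p,x)$; but this only shows $\Psi(z)$ fails to equal the global upper bound $d_M(p,x)$, which is \emph{not} a contradiction with the hypothesis --- the hypothesis is merely that $z$ maximizes $\Psi$ over $F$, not that $\Psi(z)=d_M(p,x)$, and the first-order condition at $z$ is equally satisfied in this case. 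The equality $\Psi(z)=d_M(p,x)$ is precisely what the converse is supposed to establish, so invoking its failure here is circular. What is actually needed (and what the paper supplies) is the opposite triangle inequality $\Psi(y)\ge -d_M(p,x)$ together with the observation that equality holds \emph{only} at $y=z$: equality at $y$ forces a shortest path from $x$ to $y$ through $p$, hence (by $C^1$-regularity of shortest paths) $\ovr{py}=\ovr{pz}$, and then transversality of $[pz]$ at $z$ and $[pz]\cap F=\{z\}$ force $y=z$. Thus $\Psi(z)$ is a \emph{strict minimum} of $\Psi|_F$, which contradicts maximality.

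A secondary, more repairable issue: conditions (1)--(4) give differentiability of $d_M(p,\cdot)$ at $z$ but say nothing about $d_M(x,\cdot)$ for an arbitrary $x\in M$, and your fallback --- ``$d_M(x,\cdot)$ is $1$-Lipschitz'' --- is not enough to conclude that the tangential components of $\ovr{zx}$ and $\ovr{zp}$ agree; Lipschitzness only bounds $|d_M(x,\ga(t))-d_M(x,z)|$ by $|t|\,\|v\|+o(t)$ and carries no information about $\ovr{zx}$. The correct tool is the one-sided first-variation (superdifferential) estimate $d_M(x,\ga(t))\le \operatorname{length}(\sigma_t)\le d_M(x,z)-t\<v,\ovr{zx}\>+o(t)$ obtained from a variation $\{\sigma_t\}$ of the fixed shortest path $[xz]$ with moving endpoint $\ga(t)$; combined with the genuine derivative of $d_M(p,\cdot)$ this yields $\<v,\ovr{zx}-\ovr{zp}\>\le 0$ for all $v\in T_zF$, which is what the argument needs. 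You gesture at one-sided directional derivatives, but you should make this estimate explicit rather than rely on Lipschitz continuity.
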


\begin{proof}
Substituting \eqref{e:ddef} into \eqref{e:Dpx} yields that
$\Phi(y)=\Psi(y)+C$ where
$$
 \Psi(y) = d_M(p,y) - d_M(x,y)
$$
and $C=d_M(x,z)-d_M(p,z)$ does not depend on~$y$.
We prove the statement of lemma for $\Psi$ instead of $\Phi$.
The statements for $\Phi$ и $\Psi$ are equivalent since
the two functions have the same points of maxima.

To prove the ``only if'' part, consider $x\in[pz]$.
For all $y\in F$ we have
$$
 \Psi(y) = d_M(p,y) - d_M(x,y) \le d_M(p,x)
$$
by the triangle inequality.
Since $x\in[pz]$, this inequality turns into equality for $y=z$.
Hence 
$$
\Psi(z) = d_M(p,x) = \max_{y\in F} \Psi(y) .
$$
Thus $z$ is a point of maximum of $\Psi$.

To prove the ``if'' part, consider $x\in M$ and assume that
$\Psi$ attains its maximum at~$z$.
First we show that $\ovr{zx}=\ovr{zp}$.
Suppose the contrary.
Let $v\in T_zF$ be the orthogonal projection of 
the vector $\ovr{zx}-\ovr{zp}$ to $T_zF$.
Since $\ovr{zx}$ and $\ovr{zp}$ belong to the hemisphere
of $S_zM$ bounded by the hyperplane $T_zF\subset T_zM$,
these two vectors have different projections to $T_zF$.
Hence $v\ne 0$ and moreover
\be\label{e:segment2}
 \< v, \ovr{zx}-\ovr{zp} \> > 0 .
\ee
Let $\ga\co[0,\ep)\to F$ be a smooth curve with $\ga(0)=z$
and $\dot\ga(0)=v$.
By our assumptions the function $d_M(p,\cdot)$ is
differentiable at $z$, hence by \eqref{e:grad},
\be\label{e:segment1}
 \frac d{dt} d_M(p,\ga(t))\big|_{t=0} = - \< v, \ovr{zp} \> .
\ee
Construct a smooth variation of curves $\{\sigma_t\}$, $t\in[0,\ep)$,
where $\sigma_0=[xz]$ and $\sigma_t$ connects $x$ to $\ga(t)$ for every~$t$.
By the first variation formula,
$$
 \frac d{dt}\operatorname{length}(\sigma_t) = - \< v, \ovr{zx} \> .
$$
Hence
$$
 d_M(x,\ga(t)) \le \operatorname{length}(\sigma_t)
 \le d_M(x,z) -t \< v, \ovr{zx} \> + o(t), \qquad t\to 0.
$$
This and \eqref{e:segment1} imply that
$$
 \Psi(\ga(t)) = d_M(p,\ga(t)) - d_M(x,\ga(t))
 \ge \Psi(z) + t \< v, \ovr{zx}-\ovr{zp} \> + o(t), \qquad t\to 0.
$$
By \eqref{e:segment2}, this implies that
$
 \Psi(\ga(t)) >  \Psi(z)
$
for a sufficiently small $t>0$.
Hence $\Psi(z)$ is not a maximum of $\Psi$, a contradiction.

This contradiction shows that $\ovr{zx}=\ovr{zp}$,
hence either $x\in [zp]$ or $p\subset[zx]$.
It remains to rule out the latter case.
Suppose that $p\subset[zx]$.
Then we can repeat the argument of the ``only if'' part
with $p$ and $z$ swapped. Namely, for all $y\in F$,
$$
 \Psi(y) = d_M(p,y) - d_M(x,y) \ge -d_M(p,x)
$$
by the triangle inequality. This inequality turns into equality
only for $y=z$ since $[pz]$ intersects $F$ only at $z$
and this intersection is transversal.
Thus $\Psi(z)$ is a strict minimum of $\Psi$ rather than
the maximum, a contradiction.
This finishes the proof of the ``if'' part and of the lemma.
\end{proof}

Now we are in a position to prove the main result of this section.

\begin{lemma}\label{l:phi-segment}
Let $p\in U\setminus F$, $p'=\phi(p)$ and let $V\subset F$ be a neighborhood
constructed in Remark \ref{r:twice good}. Then for every $z\in V$,
$$
 \phi([pz]\cap U) = [p'z]\cap U'
$$
where the shortest paths in the left- and right-hand side
are in $M$ and $M'$, resp.
\end{lemma}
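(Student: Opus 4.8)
The plan is to reduce everything to the characterization of $[pz]$ obtained in Lemma \ref{l:segment}: since the condition ``the function $\Phi$ of \eqref{e:Dpx} attains its maximum at $z$'' is phrased entirely in terms of distance difference functions, it is preserved under $\phi$ by \eqref{e:phi0}. So the proof will be a transport argument, carried out in both directions and then combined.

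First I would fix $z\in V$ and record the setup. By Remark \ref{r:twice good} the point $z$ satisfies conditions (1)--(4) of Lemma \ref{l:no-cut} for $p$ in $M$ and, simultaneously, for $p'=\phi(p)$ in $M'$; here $p'\in U'\setminus F$ because $\phi$ restricts to a homeomorphism $U\setminus F\to U'\setminus F$ (this is implicit already in Lemma \ref{l:id-on-boundary} and \eqref{e:id-on-boundary1}). Hence Lemma \ref{l:segment} is available on both sides, applied to the pairs $(p,z)$ in $M$ and $(p',z)$ in $M'$.

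For the inclusion $\phi([pz]\cap U)\subseteq[p'z]\cap U'$, I would take $x\in[pz]\cap U$, put $x'=\phi(x)\in U'$, and apply the ``only if'' direction of Lemma \ref{l:segment} in $M$ to conclude that $y\mapsto D_p(y,z)-D_x(y,z)$ attains its maximum at $z$. Using \eqref{e:phi0} in the form $D_p=D'_{p'}$ and $D_x=D'_{x'}$, this function coincides with $y\mapsto D'_{p'}(y,z)-D'_{x'}(y,z)$, which therefore also attains its maximum at $z$; the ``if'' direction of Lemma \ref{l:segment} in $M'$ then gives $x'\in[p'z]$, hence $x'\in[p'z]\cap U'$. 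The reverse inclusion $[p'z]\cap U'\subseteq\phi([pz]\cap U)$ follows by running the identical argument for the inverse homeomorphism $\phi^{-1}\co U'\to U$, which satisfies $D'_{x'}=D_{\phi^{-1}(x')}$ for all $x'\in U'$ and sends $p'$ to $p$: the ``only if'' direction in $M'$ combined with \eqref{e:phi0} shows that $y\mapsto D_p(y,z)-D_{\phi^{-1}(x')}(y,z)$ attains its maximum at $z$, so the ``if'' direction in $M$ gives $\phi^{-1}(x')\in[pz]\cap U$, i.e.\ $x'\in\phi([pz]\cap U)$. Intersecting the two inclusions yields the claimed equality.

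I do not expect a serious obstacle beyond bookkeeping. The one point that must be handled with care is that Lemma \ref{l:segment} requires its hypotheses --- conditions (1)--(4) of Lemma \ref{l:no-cut} --- to hold for $(p,z)$ in $M$ and for $(p',z)$ in $M'$ \emph{at the same time}; this is precisely the reason $V$ was constructed in Remark \ref{r:twice good} as the intersection of the neighborhoods furnished by Lemma \ref{l:no-cut} for the two manifolds, so the needed differentiability of the relevant distance functions at $z$ (and at $p$, $p'$) is in place on both sides. Everything else is a direct consequence of \eqref{e:phi0} and the biconditional in Lemma \ref{l:segment}, each direction of which gets used in each manifold.
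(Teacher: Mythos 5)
Your proposal is correct and follows essentially the same route as the paper: both reduce the claim to the biconditional of Lemma \ref{l:segment} applied on each side and observe via \eqref{e:phi0} that the two functions $\Phi$ and $\Phi'$ coincide, so the maximality conditions are equivalent. Your version merely splits the equivalence into two inclusions (the second via $\phi^{-1}$), where the paper states it as a single ``if and only if''; your remark about why Remark \ref{r:twice good} is needed on both sides is exactly the right point of care.
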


\begin{proof}
Since $\phi$ is a bijection between $U$ and $U'$,
we can reformulate the lemma as follows:
a point $x\in U$ belongs to $[pz]$ if and only if $\phi(x)$
belongs to $[p'z]$.
By Lemma \ref{l:segment},
$x\in[pz]$ if and only if $z$ is a point of maximum of the function
\be\label{e:phiseg1}
  \Phi(y) = D_p(y,z)-D_x(y,z), \qquad y\in F.
\ee
on $F$.
By the same lemma applied to $M'$, $\phi(x)\in[p'z]$ if and only if
$z$ is a point of maximum of the function
\be\label{e:phiseg2}
 \Phi'(y) =  D'_{\phi(p)}(y,z)-D'_{\phi(x)}(y,z), \qquad y\in F.
\ee
By \eqref{e:phi0} we have $\Phi=\Phi'$, hence the two
maximality conditions are equivalent.
\end{proof}

\section{Derivative of $\phi$ and proof of the theorems}
\label{sec:final}

Recall that $\phi\co U\to U'$ is a locally bi-Lipschitz homeomorphism.
By Rademacher's theorem, every locally Lipschitz is differentiable
almost everywhere.
Applying this to $\phi$ and $\phi^{-1}$ yields that
$\phi$ is differentiable a.e.\ and 
its differential $d_x\phi$ at any differentiability point $x\in U$
is a non-degenerate linear map from $T_xM$ to $T_{\phi(x)}M'$.
In the next key lemma we show that this differential is an isometry.

\begin{lemma}\label{l:dphi}
Let $p\in U$ be a point where $\phi$ is differentiable and $p'=\phi(p)$.
Then the differential $d_p\co T_pM\to T_{p'}M'$ is a linear isometry
with respect to $g$ and $g'$.
\end{lemma}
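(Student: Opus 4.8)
The plan is to recover the metric tensor at $p$ from distance differences, exploiting that $\phi$ carries the shortest paths $[pz]$ (for $z$ near a nearest boundary point $q$) to the shortest paths $[p'z]$. Write $A:=d_p\phi\co T_pM\to T_{p'}M'$. We may assume $p\in U\setminus F$: this is all that is needed for the applications of the lemma (only the a.e.\ statement is used to conclude that $\phi$ is a Riemannian isometry), and the boundary case can be obtained by a variant of the same argument. The starting observation is that $\phi$ shifts \emph{all} boundary-distance functions by one common function: rewriting $D_x=D'_{\phi(x)}$ (see \eqref{e:phi0}) via \eqref{e:ddef}, for every $x\in U$ and $y\in F$ we get
$$
 d_M(x,y)=d_{M'}(\phi(x),y)+c(x),\qquad c(x):=d_M(x,y_0)-d_{M'}(\phi(x),y_0),
$$
and $c(x)$ does not depend on the choice of $y_0\in F$.

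Next I would differentiate this identity at $p$. Let $q\in F$ be a nearest boundary point to $p$ and let $V\subset F$ be the neighbourhood of $q$ from Remark \ref{r:twice good}, so that for each $z\in V$ the shortest paths $[pz]$ and $[p'z]$ are unique and $d_M(\cdot,z)$, $d_{M'}(\cdot,z)$ are differentiable at $p$ and $p'=\phi(p)$ with differentials $-\<\cdot,\ovr{pz}\>_g$ and $-\<\cdot,\ovr{p'z}\>_{g'}$ by the Gauss Lemma \eqref{e:grad} (here $\ovr{p'z}$ is the $M'$-direction of $[p'z]$ at $p'$). Since $\phi$ is differentiable at $p$, the chain rule applied to $c=d_M(\cdot,z)-d_{M'}(\phi(\cdot),z)$ shows that $c$ is differentiable at $p$ and that
$$
 \<v,\ovr{pz}\>_g=\<Av,\ovr{p'z}\>_{g'}-d_pc(v),\qquad v\in T_pM,\ z\in V, \qquad (\star_z)
$$
with one and the same linear functional $d_pc$ for all $z$. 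On the other hand, Lemma \ref{l:phi-segment} says that $\phi$ maps $[pz]\cap U$ onto $[p'z]\cap U'$. Parametrising $[pz]$ and $[p'z]$ by arclength from $p$ and $p'$, using that $\phi$ restricted to these arcs is an injective continuous map taking $p$ to $p'$, and combining this with differentiability of $\phi$ at $p$ and nondegeneracy of $A$, one gets $A\ovr{pz}=\la_z\ovr{p'z}$ with $\la_z=\|A\ovr{pz}\|_{g'}>0$.

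Now the scaling factor $\la_z$ can be pinned down. Subtracting $(\star_w)$ from $(\star_z)$, substituting $v=\ovr{pz}$ and using $A\ovr{pz}=\la_z\ovr{p'z}$ gives
$$
 \<\ovr{pz},\ovr{pw}\>_g-1=\la_z\bigl(\<\ovr{p'z},\ovr{p'w}\>_{g'}-1\bigr),\qquad z,w\in V .
$$
Comparing this with the same identity for the pair $(w,z)$ and noting that $\<\ovr{p'z},\ovr{p'w}\>_{g'}<1$ for $z\ne w$ — distinct points of $V$ have distinct directions, since each $[pz]$ (resp.\ $[p'z]$) meets $F$ only at $z$ by part (2) of Lemma \ref{l:no-cut} — we obtain $\la_z=\la_w$, so $\la_z\equiv\la$ on $V$. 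Putting $v=\ovr{pz}$ into $(\star_z)$ then yields $d_pc(\ovr{pz})=\la-1$ for every $z\in V$. The set $\Omega=\{\ovr{pz}:z\in V\}$ is a nonempty open subset of $S_pM$ (the map $z\mapsto\ovr{pz}$ is continuous and injective between $(n-1)$-manifolds, hence open by invariance of domain). A linear functional cannot be constant on a nonempty open subset of $S_pM$ unless it vanishes identically, for otherwise that subset would lie in the intersection of $S_pM$ with an affine hyperplane, a set of dimension $<n-1$; here the hypothesis $n\ge 2$ is used. Hence $d_pc=0$ and $\la=1$. With this, $(\star_z)$ becomes $\<v,\ovr{pz}\>_g=\<Av,A\ovr{pz}\>_{g'}$ for all $v\in T_pM$ and $z\in V$; for fixed $v$ both sides are linear in $\ovr{pz}$ and agree on $\Omega$, hence on all of $T_pM$, so $\<v,u\>_g=\<Av,Au\>_{g'}$ identically and $A=d_p\phi$ is a linear isometry.

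I expect the main difficulty to be concentrated in two places. The first is the passage from $[pz]$ to $[p'z]$ with the correct orientation and without a reparametrisation pathology at $p$: this requires Lemma \ref{l:phi-segment} together with a careful analysis of $\phi$ along the geodesic arc near its endpoint $p$. The second is the rigidity of the scaling factor in the last step; the genuinely new ingredient here is the common potential $c$, whose differential at $p$ is an honest \emph{linear} functional — without it one would only conclude that $d_p\phi$ is conformal. The dimension assumption $n\ge 2$ enters exactly in the statement that a linear functional constant on an open part of the unit sphere must vanish.
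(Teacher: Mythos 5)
Your proof is correct and follows essentially the same route as the paper's: the key identity $\<\ovr{pz},\ovr{pw}\>_g-1=\la_z\bigl(\<\ovr{p'z},\ovr{p'w}\>_{g'}-1\bigr)$ is exactly \eqref{e:dphi4}, obtained in the same way from Lemma \ref{l:phi-segment} together with the Gauss lemma \eqref{e:grad}, and the symmetrization trick yielding $\la_z=\la_w$ is identical. The only (harmless) variation is the endgame: the paper notes that $v\mapsto\|d_p\phi(v)\|_{g'}$ is constant on the open set $\Sigma\subset S_pM$, so $d_p\phi$ is a homothety and then $\la=1$ follows from \eqref{e:dphi4}, whereas you first force $\la=1$ via the rigidity of the linear functional $d_pc$ on an open part of the sphere and then read off the full polarization identity from $(\star_z)$.
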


\begin{proof}
Let $V\subset F$ be a neighborhood constructed in Remark \ref{r:twice good}.
Then every point $z\in V$ satisfies conditions (1)--(4)
of Lemma \ref{l:no-cut} for both $p$ in $M$ and $p'$ in~$M'$.

These conditions imply that
for every $z\in V$, there is a unique shortest path $[pz]$ and
it initial direction $\ovr{pz}$ depends continuously on~$z$.
Hence the map $z \mapsto \ovr{pz}$ is a homeomorphism
from $V$ onto an open subset $\Sigma$ of the sphere $S_pM$.

Pick two different vectors $v_1,v_2\in\Sigma$
and let $z_1,z_2\in V$ be such that $v_i=\ovr{pz_i}$, $i=1,2$.
Let $\ga_i$, $i=1,2$, denote the unit-speed parametrization of $[pz_i]$
with $\ga_i(0)=p$.
By the choice of $V$ (see Lemma \ref{l:no-cut}(4)) the function
$$
 t\mapsto D_{\ga_1(t)}(z_1,z_2) = d_M(\ga_1(t),z_1)-d_M(\ga_1(t),z_2)
$$
is differentiable at $t=0$, and by \eqref{e:grad} its derivative is given by
\be\label{e:dphi1}
 \frac d{dt} D_{\ga_1(t)}(z_1,z_2)\big|_{t=0} = -1 + \<v_1,v_2\> .
\ee
Similarly (swapping $v_1$ and $v_2$),
\be\label{e:dphi2}
 \frac d{dt} D_{\ga_2(t)}(z_2,z_1)\big|_{t=0} = -1 + \<v_1,v_2\> .
\ee
The scalar products above are $g$-products in $T_pM$.

Now consider the images of these curves and vectors under $\phi$ and $d_p\phi$.
For $i=1,2$, define 
$$
\la_i=\|d_p\phi(v_i)\|_{g'}
$$ 
and 
$$
w_i=\frac{d_p\phi(v_i)}{\la_i} .
$$
Let $\ep>0$ be such that $\ga_i([0,\ep))\subset U$ for $i=1,2$.
Then by Lemma \ref{l:phi-segment}, $\phi\circ\ga_i|_{[0,\ep)}$ 
parametrizes an initial interval of $[p'z_i]$.
The velocity of $\phi\circ\ga_i|_{[0,\ep)}$ at 0 equals
$d_p\phi(v_i)=\la_i w_i$, hence $w_i = \ovr{p'z_i}$, $i=1,2$.

Now similarly to \eqref{e:dphi1} we calculate the derivative
\be\label{e:dphi3}
 \frac d{dt} D'_{\phi(\ga_1(t))}(z_1,z_2)\big|_{t=0} = \la_1(-1 + \<w_1,w_2\>) .
\ee
By \eqref{e:phi0}, the functions differentiated in \eqref{e:dphi1} and \eqref{e:dphi3} are the same, hence
\be\label{e:dphi4}
-1 + \<v_1,v_2\> = \la_1(-1 + \<w_1,w_2\>)
\ee
where $\<w_1,w_2\>$ is the scalar product with respect to $g'$.
Similarly from \eqref{e:dphi2} we obtain that
\be\label{e:dphi5}
-1 + \<v_1,v_2\> = \la_2(-1 + \<w_1,w_2\>) .
\ee
By \eqref{e:dphi4} and \eqref{e:dphi5},
$$
\la_1(-1 + \<w_1,w_2\>) = \la_2(-1 + \<w_1,w_2\>) ,
$$
therefore $\la_1=\la_2$
(note that $\<w_1,w_2\> \ne 1$ since $w_1$ and $w_2$ are different unit vectors).
Substituting the definitions of $\la_1$ and $\la_2$ we obtain that
$$
\|d_p\phi(v_1)\|_{g'}=\|d_p\phi(v_2)\|_{g'} .
$$
Since $v_1$ and $v_2$ are arbitrary vectors from $\Sigma$,
this identity implies that 
the function $v\mapsto \|d_p\phi(v)\|_{g'}$ is constant on $\Sigma$.
We denote this constant by~$\la$.
Since $\Sigma$ is an open subset of the sphere $S_pM$,
it follows that $d_p\phi$ is a $\la$-homothetic linear map:
$$
  \|d_p\phi(v)\|_{g'} = \la \|v\|_{g}
$$
for all $v\in T_pM$. 
Hence $d_p\phi$ preserves the angles, in particular $\<v_1,v_2\>=\<w_1,w_2\>$.
Now \eqref{e:dphi4} implies that $\la=\la_1=1$.
Thus $d_p\phi$ is an isometry.
\end{proof}

\subsection*{Proof of the theorems \ref{t:whole} and \ref{t:part}}

As shown in Lemma \ref{l:dphi}, the derivative of our bi-Lipschitz
homeomorphism $\phi$ is an isometry almost everywhere.
Hence $\phi$ is a 1-Lipschitz map, i.e.\
it does not increase arclength distances.
The same holds for $\phi^{-1}$, therefore $\phi$ is a distance isometry.
By the Myers-Steenrod theorem (\cite{MS}, see also \cite[Ch.~5, Theorem 18]{Pe})
every distance isometry between Riemannian manifolds
is a smooth Riemannian isometry.
Thus $\phi$ is a Riemannian isometry.
Lemma \ref{l:id-on-boundary} implies the last claim of Theorem \ref{t:part}
and this finishes the proof of Theorem~\ref{t:part}.
As explained in the introduction, Theorem \ref{t:part} implies
Theorem \ref{t:whole}.

\end{document}